\documentclass{article}
\usepackage[utf8]{inputenc}
\usepackage[margin=2.5cm]{geometry}
\usepackage{amsmath,amsthm,amsfonts,amssymb}
\usepackage[mathscr]{euscript}
\usepackage{amsmath}
\usepackage[version=4]{mhchem}
\usepackage{array,float}

\usepackage{graphicx,latexsym}
\usepackage{lscape}
\usepackage{multicol}
\usepackage{subfig}
\usepackage{graphicx}
\usepackage{multirow}
\usepackage{graphicx}
\usepackage{cite}

\newtheorem{thm}{Theorem}

\newtheorem{lem}[thm]{Lemma}

\newtheorem{discu}{Discussion:}

\newcommand{\diam}{{\rm diam}}

\begin{document}
\title{\textbf{Power Domination and Resolving Power Domination of Fractal Cubic Network}}
\author{\begin{tabular}{rcl}
		\textbf{S. Prabhu$^{\text a, }$\thanks{Corresponding author: drsavariprabhu@gmail.com }, A.K. Arulmozhi$^{\text b}$, Michael A. Henning$^{\text c}$, M. Arulperumjothi$^{\text d}$}
	\end{tabular}\\
	\begin{tabular}{c}
		$^{\text a}$\small Department of Mathematics, Rajalakshmi Engineering College, Chennai 602105, India \\
		$^{\text b}$\small Department of Mathematics, RMK College of Engineering and Technology, Puduvoyal 601206, India \\
		$^{\text c}$\small Department of Mathematics and Applied Mathematics, University of Johannesburg, Auckland Park, 2006 South Africa \\ 
		$^{\text d}$\small Department of Mathematics, St. Joseph's College of Engineering, Chennai 600119, India \\
\end{tabular}}
\maketitle
\vspace{-0.5 cm}
\begin{abstract}
{\small
In network theory, the domination parameter is vital in investigating several structural features of the networks, including connectedness, their tendency to form clusters, compactness, and symmetry. In this context, various domination parameters have been created using several properties to determine where machines should be placed to ensure that all the places are monitored. To ensure efficient and effective operation, a piece of equipment must monitor their network (power networks) to answer whenever there is a change in the demand and availability conditions. Consequently, phasor measurement units (\textbf{PMU}s)  are utilised by numerous electrical companies to monitor their networks perpetually. Overseeing an electrical system which consists of minimum \textbf{PMU}s is the same as the vertex covering the problem of graph theory, in which a subset $D$ of a vertex set $V$ is a power dominating set (\textbf{PDS}) if it monitors generators, cables, and all other components, in the electrical system using a few guidelines. Hypercube is one of the versatile, most popular, adaptable, and convertible interconnection networks. Its appealing qualities led to the development of other hypercube variants. A fractal cubic network is a new variant of the hypercube that can be used as a best substitute in case faults occur in the hypercube, which was wrongly defined in [Eng. Sci. Technol. \textbf{18}(1) (2015) 32-41]. Arulperumjothi et al. have recently corrected this definition and redefined this variant with the exact definition in [Appl. Math. Comput. \textbf{452} (2023) 128037]. This article determines the \textbf{PDS} of the fractal cubic network. Further, we investigate the resolving power dominating set (\textbf{RPDS}), which contrasts starkly with hypercubes, where resolving power domination is inherently challenging.}
\end{abstract}

\textbf{Keywords:} {\small fractal cubic network; hypercube variant; power domination;  phase measuring units; resolving power domination}\\
\textbf{Mathematics Subject Classification (2020): }{\small 05C69 $\cdot$ \small 05C12}
\section{Introduction}
The electrical nodes and connection wires constitute an electrical power network. Electric power corporation must continuously monitor their systems' conditions. It is necessary to regulate the deployment of \textbf{PMU}s at precise position within the device.

Owing to the rising cost of \textbf{PMU}s, it is essential to engage as few as feasible while still tracking the entire system. This problem is introduced as a theoretical problem in \cite{HaHeHe02}, and coined it as a power domination problem after \cite{BaMiBo93} evince its existence.

Let $G$ be a simple connected graph whose vertex set (electrical hubs) and edge set (connecting cables), respectively, is denoted by $V(G)$ and $E(G)$. If the graph $G$ is clear from the context, we write $V = V(G)$ and $E = E(G)$. Two vertices $u$ and $v$ of $G$ are \emph{adjacent} if $uv\in E$. Two adjacent vertices are called \emph{neighbors}. For $r \ge 1$ and for a vertex $v\in V$, the \emph{open} $r$-\emph{neighborhood} of $v$ is denoted by $N_r(v)$ and is defined as the collection of vertices that are at distance $r$ from $v$. The \emph{closed} $r$-\emph{neighborhood} of $v$ is $N_r(v)\cup \{ v \}$, which is formally denoted by $N_r[v]$. When $r = 1$, the \emph{open} $r$-\emph{neighborhood} of $v$ is called its \emph{open neighborhood} and the \emph{closed} $r$-\emph{neighborhood} of $v$  is called its \emph{closed neighborhood}. For a set $S \subseteq V$, its \emph{open neighborhood} is the set $N_G(S) = \cup_{v \in S} N_1(v)$, and its \emph{closed neighborhood} is the set $N_G[S] = N_G(S) \cup S$.  The \emph{degree} of a vertex $v$ in $G$ is the number of vertices adjacent to $v$ in $G$, and is denoted by $\deg_G(v)$, and so $\deg_G(v) = |N_1(v)|$. We let $\mathbb{N}_n:=\{1,2,\ldots, n\}$ and $\mathbb{W}_n:=\{0,1,2,\ldots, n\}$. Also, we use $\mathbb{N}_{2n+1}-\mathbb{N}_{n}$ to denote $\{n+1,n+2,\ldots,2n+1\}$.

A set $S \subseteq V$ is a \emph{dominating set} of $G$ if every vertex in $V \setminus S$ is adjacent to at least one vertex in $S$. A vertex $v$ in $G$ \emph{dominates} itself and all its neighbors, and a set $X$ \emph{dominates} a set $Y$ in $G$ if every vertex in $Y$ is dominated by at least one vertex in $X$. Thus a dominating set $S$ of $G$ dominates every vertex in $V(G)$. The \emph{domination number} of $G$, denoted by $\gamma(G)$, is the minimum cardinality of a dominating set in $G$. We refer \cite{Ch18, ShLiYi14, ChLuWu14, PaZhZh10, Ye03, TsLiHs07, WuShLi10,haynes1,haynes2,haynes3} for a detailed study of domination and its variants.

If $V$ is observed or recursively observed by the following two rules with respect to a set $S$ of vertices in $G$, then $S$ is called \emph{power dominating set}, abbreviated \textbf{PDS} of $G$.

\begin{enumerate}
	\item \textbf{Domination:}\\
	$M \leftarrow  N_G[S]$
	\item \textbf{Propagation:}\\
	$\exists, \enspace x \in M$ s.t. $N(x) \cap (V- M)=\{y\}$\\
	$M\leftarrow M\cup\{y\}$
\end{enumerate}

In the domination step of PDS, all vertices in $N_G[S]$ are \emph{monitored}. In the event of the propagation phase of PDS, whenever a vertex $x$ is monitored, and $N_G(x)\smallsetminus \{y\}$ is monitored, then the vertex $y$ is added to the set $M$ and is also monitored. An initial set $S$ in the first step is a \textbf{PDS} for $G$ if upon completion of the propagation phase the resulting set $M$ is the entire vertex set~$V$. The term $\gamma_P(G)$ is used to denote the minimum cardinality of a \textbf{PDS} in $G$ called the \emph{power domination number}. 

Investigation of \textbf{PDS} for arbitrary general graphs is NP-complete. It remains NP-complete for classes of graphs such as bipartite, chordal, and split graphs~\cite{HaHeHe02}. Numerous algorithms for fetching the \textbf{PDS} for a particular graph family were reported in \cite{HaHeHe02, GuNiRa08, AaSt09}. This invariant is investigated for generalized Petersen family of graphs \cite{BaFe11,XuKa11,ChDoMo12,ZhShKa20}, hypercubes \cite{DeIlRa11}, circular-arc graphs \cite{LiLe13}, block graphs \cite {XuKaSh06}, permutaion graphs \cite{Wi19}, grids \cite{DoHe06}, planar and maximal planar graphs \cite{DoGoPe19}, Hanoi and Knödel graphs \cite{VaViHi18}, Kautz and de Bruijn graphs \cite{KuWu15}, claw-free regular graphs \cite{LuMaWa20} and octahedral structures \cite{PrDeEl22}. Also, for graph operations like strong and tensor product  \cite{DoMoKl08}, Cartesian product \cite{BaFe11, KoSo19}, join and corona product \cite{YuAgWa19}, the \textbf{PDS} problem is examined. The lower bounds \cite{FeHoKe17}, upper bounds \cite{ZhKaCh06}, and \textbf{NG} (Nordhaus-Gaddum) type results on \textbf{PDS} were discussed in \cite{BeFeFl18}. An excellent survey on this topic can be seen in the book chapter by Dorbec~\cite{Do-20}.

The concept of a $k$-\emph{power dominating set}, abbreviated $k$-\textbf{PDS}, was introduced in \cite{ChDoMo12}, where a $0$-\textbf{PDS} is a traditional dominating set and a $1$-\textbf{PDS} is the original power dominating set \textbf{PDS}. This parameter is investigated for certain interconnection networks \cite{RaArPr23}, weighted trees \cite{ChLuZh20},  Sierpi\'{n}ski networks \cite{DoKl14}, block graphs \cite{WaChLu16}, regular graphs \cite{DoHeMo13}. Few more variation namely  power dominating throttling \cite{BrCaHi19} and infectious power domination \cite{Bj20} are recent and interesting problems.

The concept of metric dimension (\textbf{MD}) was primarily discussed in \cite{Sl75} and separately in \cite{HaMe76}. It is equivalent to the least number of landmark vertices from which any two vertices can be differentiated using the distance parameter. The applications of this problem arise in various branches of science and technology.

The \emph{graph geodesic} between two vertices $u$ and $v$ is the length (in terms of the number of edges) of the shortest path between $u$ and $v$. The \emph{diameter} $\diam(G)$ of $G$ is the maximum length of a geodesic in $G$. The maximum distance over all pairs of $V(G)$ is called diameter.

For a vertex $x\in V(G)$, the \textit{code} of $x$ with respect to the subset $R$ $=\{r_{1},r_{2},\ldots,r_{k}\}$ of $V(G)$ is termed as a $k$-vector%
\begin{equation*}
	C_{R}(x)=(d_{G}(x,r_{1}),d_{G}(x,r_{2}),\ldots,d_{G}(x,r_{k}))
\end{equation*}%
where $d_{G}(x,r_{i})$ is the geodesic from $x$ to $r_{i}$ for $i \in \mathbb{N}_k$. The proper subset $R$ is a \textit{basis} or \textit{resolving set} for $G$ if any two distinct vertices of $G$ have nonidentical codes w.r.t $R$. Equivalently, for any two distinct vertices $x,y \in V(G)$, there exist a vertex $r\in R$ such that $d_{G}(r,x)\ne d_{G}(r,y)$. See Figure \ref{basis}. The basis of $G$ with fewer cardinality is called the \emph{resolving number} (also called the \emph{basis dimension}) of $G$ and is denoted by $\dim(G)$. More on this topic and some of its recent works can be found in \cite{ArKlPr23, PrJeAr23, PrMaAr22, PrDeAr22}.

\begin{figure}[H]
	\centering
	\includegraphics[scale=0.55]{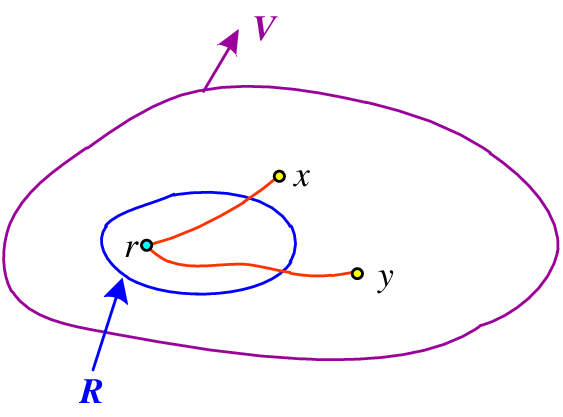}
	\caption{Basis}
	\label{basis}
\end{figure}

\section{Fractal Cubic Network: A New Hypercube Variant}

\par Frequently, an interconnection networks with multiprocessors are essential to link an eloquent portion of dependably imitated processors (vertices). In lieu of shared memory, message transient is mostly used to afford complete transmission and synchronisation between processors for programmed execution. Owing to the availability of cost-effective, potent memory circuits and microprocessors, there has been a recent surge in fascination in implementing and designing multistage networks.

In parallel computing and supercomputers, interconnection networks, the development of CPUs, and routing algorithms are three primary research areas. The interconnection network, which connects millions of processors, is essential for the development of a supercomputer.

Multiple processors, each with their own cognitive connections (edges) and local sensing that enable data transfer between processors (vertices), compose an interconnection network. It can be represented as the previously defined graph $G$ in which two vertices $x_i$ and $x_j$ are directly connected by a communication connection. The attribute used to evaluate the productiveness of the networks are the bisection width, broadcasting time, fault-tolerance,  degree, and diameter \cite{AkKr89}.

The hypercube is a common interconnection network design distinguished by its regularity, ease of transit, recursive structure, symmetry, and high connectedness. In recent years, hypercubes have been the subject of extensive research into their various properties \cite{HaHaWu88}.

There are many hypercube variations in the literature, including exchanged hypercube \cite{LoHsPa05}, folded hypercubes \cite{ElLa91, ZhLiXu08}, crossed cubes \cite{Ef92, FaJi07}, exchanged crossed cube \cite{LiMuLi13}, twisted cubes \cite{AbPa91, ChWaHs99}, locally twisted cubes \cite{YaEvMe05},  shuffle cubes \cite{LiTaHs01}, spined cubes \cite{ZhFaJi11}, Möbius cubes \cite{CuLa95}, and augmented cubes \cite{ChSu02}. The hierarchical cubic network (HCN) \cite{GhDe95, YuPa98} and its folded version in \cite{DuChFa95} have also been ideas put forth based on a hierarchical framework employing the base hypercube as an introductory module.

Despite the fact that numerous studies have been conducted on the variants of hypercube enumerated above, the problem resolving number has not been investigated for any of these variants except fractal cubic network. With this motivation, we investigate power domination and resolving power domination for the newly proposed hypercube variant fractal cubic network (\textbf{FCN}) \cite{KaSe15}. Though the definition of this architecture is not clear in~\cite{KaSe15}, we corrected this definition in~\cite{ArKlPr23}. We define $FCN(0)$ as a cycle with four vertices $00$, $01$, $11$, and $10$. For $d \ge 1$, we define $FCN(d)$ as follows:

An $d$-dimensional \textbf{FCN} is defined as $FCN(d) = (V_{1}(d), E_{1}(d))$, $d > 0$, and can be constructed as follows
\[
FCN(d) = 11 \mathbin\Vert FCN(d-1)\cup 01 \mathbin\Vert FCN(d-1)\cup 10 \mathbin\Vert FCN(d-1) \cup 00 \mathbin\Vert FCN(d-1),
\]
where
\[
V_{1}(d) = 11 \mathbin\Vert V_{1}(d-1)\cup 01 \mathbin\Vert V_{1}(d-1)\cup 10\mathbin\Vert V_{1}(d-1)\cup 00\mathbin\Vert V_{1}(d-1)
\]
and
\[
\begin{array}{lcl}
E_{1}(d) & = & 11 \mathbin\Vert E_{1}(d-1) \cup 01 \mathbin\Vert E_{1}(d-1)\cup 10\mathbin\Vert E_{1}(d-1)\cup 00\mathbin\Vert E_{1}(d-1) \\
& & \, \cup \, \{(001100\ldots0,101100\ldots0), (101100\ldots0,111100\ldots0) \} \\
& & \, \cup \, \{(111100\ldots0,011100\ldots0),(011100\ldots0,001100\ldots0)\}.
\end{array}
\]
Figure~\ref{twins}(a)-(d), respectively, denotes the $FCN(0)$, $FCN(1)$, $FCN(2)$, and $FCN(3)$.

We denote $11 \mathbin\Vert FCN(d-1)$ as the collection of strings obtained by concatenating $11$ and each of the strings in $V_1(d-1)$. For $d \ge 1$, $FCN(d)$ is constructed from four copies of $FCN(d-1)$ with four additional edges connecting them.
\section{Main Results}

Before proceeding to the main arguments, we first present some preliminary lemma and results that are crucial for our investigation.

\subsection{Twin Nodes} \label{methods}

Two vertices $u, v\in V$ are said to be \emph{non}-\emph{adjacent twins} (also called \emph{open twins} in the literature) if $N_1(u)=N_1(v)$ and are \emph{adjacent twins} (also called \emph{closed twins} in the literature) if $N_1[u]=N_1[v]$ (see~\cite{BaEsRa19, Li18}). The vertices $u$ and $v$ shown in Figure~\ref{twindefn}(a) are open twins, while the vertices  $u$ and $v$ shown in Figure~\ref{twindefn}(b) are closed twins. Two vertices in $G$ are \emph{twins} if they are open or closed twins in $G$. A set $T \subseteq V(G)$ is a \emph{open twin set} or \emph{open twin class} of $G$, if every pair of vertices in $T$ are open twins in $G$, while a set $T \subseteq V(G)$ is a \emph{closed twin set} or \emph{closed twin class} of $G$, if every pair of vertices in $T$ are closed twins in $G$. The sets $T_1$ and $T_2$ shown in Figure~\ref{twindefn}(c) are examples of two open twin sets.

Arulperumjothi, Klavžar, and Prabhu~\cite{ArKlPr23} determined $\dim(G)$, where $G = FCN(d)$ and $d \ge 1$.

\begin{thm}{\rm \cite{ArKlPr23}}\label{mmdfcn}
	For $d>0$,  $\dim(FCN(d))=4^{d}$.
\end{thm}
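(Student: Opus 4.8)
The plan is to prove the matching bounds $\dim(FCN(d)) \ge 4^d$ and $\dim(FCN(d)) \le 4^d$ separately; the lower bound will come from a count of twin classes and the upper bound from an explicit resolving set of the same size.

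For the lower bound I would use the well-known twin bound (see~\cite{BaEsRa19,Li18}): since any two twins have equal distance to every other vertex, a resolving set must contain at least $|T|-1$ vertices from each twin class $T$, whence $\dim(G) \ge \sum_T (|T|-1)$. It therefore suffices to exhibit $4^d$ pairwise-disjoint twin pairs. I write every vertex of $FCN(d)$ as $p\|s$, where $p\in\{0,1\}^{2d}$ names one of the $4^d$ base cells and $s\in\{00,01,11,10\}$ is its position on that cell's $4$-cycle $p00 - p01 - p11 - p10 - p00$. Inspecting the edge set, every connector endpoint occurring in $E_1(\cdot)$ has position $s\in\{00,11\}$: each connector string is built from the patterns $0011,1011,1111,0111$, possibly padded with trailing zeros, so its last two bits are $11$ or $00$. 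Consequently the degree-two vertices $p\|01$ and $p\|10$ never receive a connector edge, so $N(p\|01)=N(p\|10)=\{p\|00,\,p\|11\}$ and they are open twins. This produces $4^d$ disjoint open-twin pairs and gives $\dim(FCN(d)) \ge 4^d$.

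For the upper bound I would take one representative from each twin pair, namely $R=\{\,p\|01 : p\in\{0,1\}^{2d}\,\}$, so $|R|=4^d$, and prove by induction on $d$ that $R$ resolves $FCN(d)$; observe that $R=\bigcup_{c\in\{00,01,10,11\}} c\|R_{d-1}$, where $R_{d-1}=\{\,q\|01 : q\in\{0,1\}^{2(d-1)}\,\}$ is the analogous set for $FCN(d-1)$, with base case $d=1$ checked directly on the $16$ vertices of $FCN(1)$. The decisive structural observation is that each copy $c\|FCN(d-1)$ is attached to the rest of $FCN(d)$ only through its single connector (gateway) vertex $g_c=c\|11 0\cdots0$; since $g_c$ is a cut vertex, every copy is isometrically embedded in $FCN(d)$. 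Hence two vertices lying in a common copy $c$ have their within-copy distances preserved, and by the induction hypothesis some landmark of $c\|R_{d-1}$ separates them.

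The main obstacle is the remaining cross-copy case and the distance bookkeeping it requires. For $x$ in copy $c$ and $y$ in copy $c'\neq c$, every landmark $r\in c\|R_{d-1}$ satisfies $d(y,r)=d(y,g_c)+d(g_c,r)$, so the portion of $y$'s code indexed by $c\|R_{d-1}$ is a constant shift of the gateway vector $(d(g_c,r))_r$, whereas the same portion of $x$'s code is its genuine within-copy code. I would derive a recursive formula for $d_{FCN(d)}$ that routes inter-copy paths through the four connector edges joining the gateways, and use it to rule out the coincidence $d(x,r)-d(g_c,r)\equiv d(y,g_c)$ for all $r$ simultaneously; combining the information from the landmarks in copy $c$ with those in copy $c'$ (where the roles of $x$ and $y$ are reversed) forces such a pair to collapse onto the gateways, which are distinct across copies, a contradiction. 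Pinning down this distance formula and the corresponding case analysis is where the real work lies; the twin-based lower bound and the intra-copy separation are comparatively routine.
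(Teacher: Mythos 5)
Your proposal is essentially correct, but note first that this paper never proves Theorem~\ref{mmdfcn}: it is imported verbatim from~\cite{ArKlPr23}, so within this paper there is nothing to compare against except the twin machinery developed for power domination. Your lower bound runs exactly parallel to that machinery: the $4^d$ pairwise disjoint open-twin pairs $\{p \mathbin\Vert 01,\, p \mathbin\Vert 10\}$, $p \in \{0,1\}^{2d}$, are precisely the twin classes the paper exhibits in the proof of Theorem~\ref{pdfcn}, and the classical fact that a resolving set omits at most one vertex of each twin class correctly yields $\dim(FCN(d)) \ge 4^d$ (your degree argument that connector endpoints always end in $11$ or $00$ is accurate). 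Your upper bound is also sound, and the step you flag as ``where the real work lies'' in fact closes much more cheaply than you anticipate --- no recursive distance formula is needed. The structural facts you already isolated suffice: each copy $c \mathbin\Vert FCN(d-1)$ meets the rest of the graph only at its gateway $g_c = c \mathbin\Vert 110\cdots 0$, so copies are isometric and, for $x$ in copy $c$ and $y$ in copy $c' \ne c$, every landmark $r \in c \mathbin\Vert R_{d-1}$ satisfies $d(y,r) = d(y,g_{c'}) + d(g_{c'},g_c) + d(g_c,r)$. If $x$ and $y$ had equal codes, then combining this with the triangle inequality inside copy $c$,
\[
d(y,g_{c'}) + d(g_{c'},g_c) + d(g_c,r) = d(x,r) \le d(x,g_c) + d(g_c,r),
\]
gives $d(y,g_{c'}) + d(g_{c'},g_c) \le d(x,g_c)$; the symmetric computation using the landmarks in copy $c'$ gives $d(x,g_c) + d(g_c,g_{c'}) \le d(y,g_{c'})$, and adding the two inequalities forces $2\,d(g_c,g_{c'}) \le 0$, a contradiction since $d(g_c,g_{c'}) \ge 1$. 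Together with your intra-copy case (isometry plus the induction hypothesis) and your base case $d=1$, which does check out on the $16$ vertices --- and which is genuinely necessary, since $\{01\}$ fails to resolve $FCN(0) = C_4$, so the induction cannot start at $d=0$ --- this completes a fully self-contained and elementary proof that $R = \{p \mathbin\Vert 01 : p \in \{0,1\}^{2d}\}$ is a resolving set of size $4^d$. So your route is valid; its payoff over the paper's treatment is that it supplies an explicit minimum resolving set and a short proof where the paper offers only a citation, and as a bonus your $R$ is exactly the set $A$ the paper uses as its minimum power dominating set in Theorem~\ref{pdfcn}, which makes the conclusion $\eta_P(FCN(d)) = 4^d$ witnessed by a single set.
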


\begin{figure}[H]
	\centering
	\subfloat[]{\includegraphics[scale=0.55]{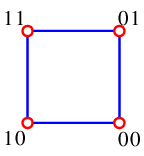}}
	\quad \quad  \quad
	\subfloat[]{\includegraphics[scale=0.55]{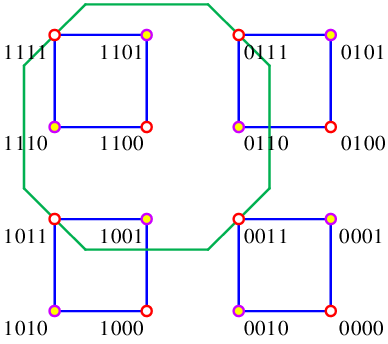}}
	\quad \quad \quad
	\subfloat[]{\includegraphics[scale=0.55]{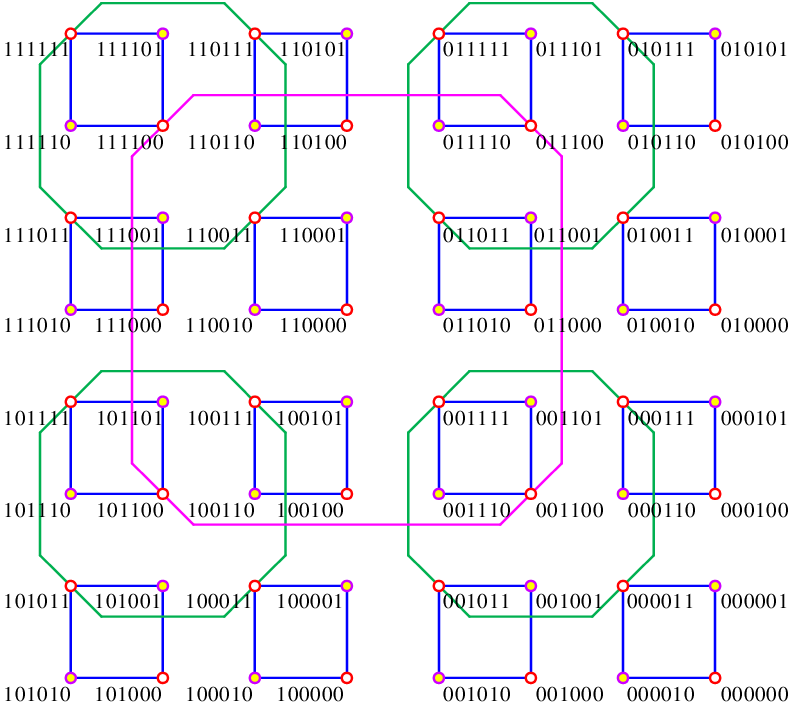}}
	\quad
	\subfloat[]{\includegraphics[scale=0.56]{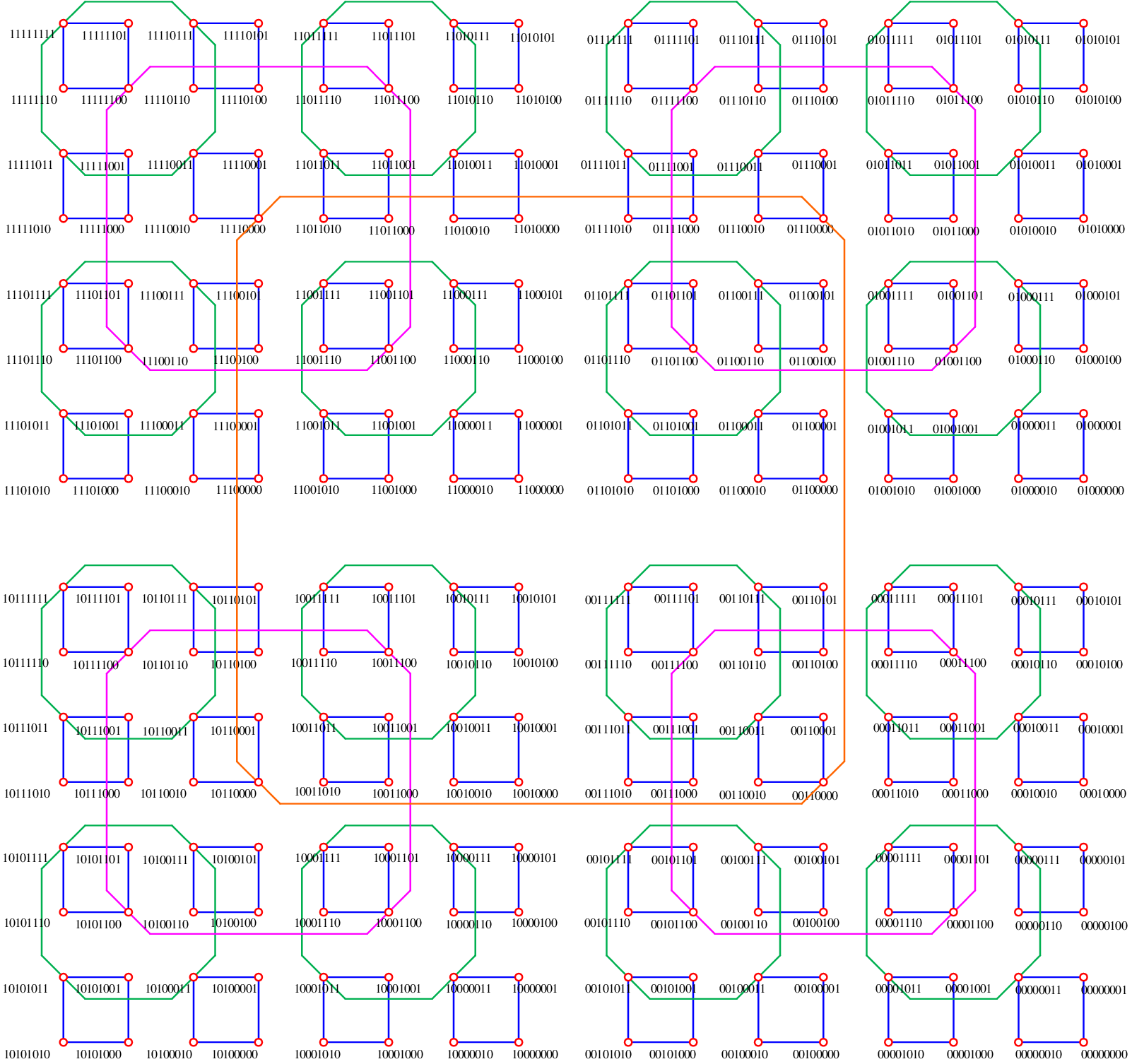}}
	\caption{(a) $FCN(0)$; (b) $FCN(1)$; (c) $FCN(2)$; (d) $FCN(3)$}
	\label{twins}
\end{figure}

\begin{figure}[H]
	\centering
	\subfloat[]{\includegraphics[scale=0.6]{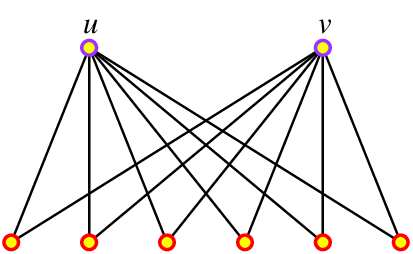}}
	\quad \quad \quad
	\subfloat[]{\includegraphics[scale=0.6]{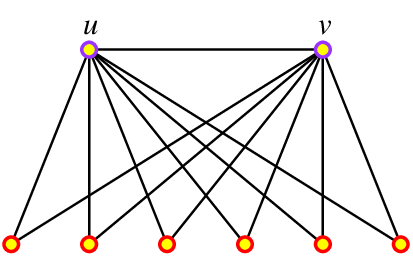}}
	\quad \quad \quad
	\subfloat[]{\includegraphics[scale=0.6]{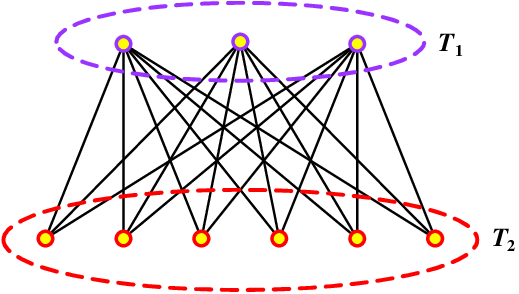}}
	\caption{(a) Open twins;  (b) Closed twins; (c) Open twin sets $T_1$ and $T_2$} \label{twindefn}
\end{figure}

We proceed further with the following two key lemmas.

\begin{lem}
\label{pds:non-emptylemma}
Let $G$ be a connected graph. If $S$ is a \textbf{PDS} of $G$ and $T$ is an open twin class of $G$, then $S\cap N_1(T) \ne \emptyset$ or $|S\cap T|\ge |T|-1$.
\end{lem}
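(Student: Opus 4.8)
The plan is to argue by contradiction, exploiting the defining property of open twins---namely that vertices in an open twin class are mutually indistinguishable by the propagation rule. Suppose $S$ is a \textbf{PDS} of $G$ but that both conclusions fail, that is $S \cap N_1(T) = \emptyset$ and $|S \cap T| \le |T| - 2$. The second inequality means at least two vertices of $T$, say $u$ and $v$, lie outside $S$. Since every vertex of $T$ shares the same open neighborhood $N_1(T)$ (this is the content of $T$ being an open twin class, so in particular $u$ and $v$ are non-adjacent and $N_1(u) = N_1(v) = N_1(T)$), and since $S$ meets neither $T$ beyond $|T|-2$ members nor $N_1(T)$ at all, I want to show that $u$ and $v$ can never be monitored.

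First I would analyze the domination step. The set initially monitored is $M_0 = N_G[S]$. Since $u \notin S$ and $u \notin N_1(S)$ would follow from $S \cap N_1(T) = \emptyset$ together with $u \notin S$: indeed the only vertices adjacent to $u$ are those in $N_1(T)$, and none of these lies in $S$, so $u \notin N_G[S]$; the identical argument shows $v \notin M_0$. Thus both $u$ and $v$ start out unmonitored.

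Next I would examine the propagation step and show the obstruction persists. The key observation is symmetry between $u$ and $v$: they have exactly the same neighborhood $N_1(T)$. I claim that at every stage of propagation, $u$ is monitored if and only if $v$ is. Propagation adds a vertex $y$ precisely when some monitored vertex $x$ has all but one of its neighbors already monitored, with $y$ the lone unmonitored neighbor. For $u$ (respectively $v$) to be added, there must be a monitored vertex $x \in N_1(T)$ whose only unmonitored neighbor is $u$ (respectively $v$). But any such $x \in N_1(T)$ is adjacent to both $u$ and $v$, so if $u$ is unmonitored then $x$ has at least the unmonitored neighbor $u$, and for $u$ to be the \emph{unique} unmonitored neighbor of $x$ we would need $v$ already monitored---and symmetrically. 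Hence neither $u$ nor $v$ can be the unique unmonitored neighbor of any vertex while the other remains unmonitored, so propagation can never enlarge $M$ to include either of them first. Formalizing this as an induction on the propagation sequence, I would show that the pair $\{u,v\}$ is never reduced, so at least one of $u,v$ is never monitored, contradicting that $S$ is a \textbf{PDS}.

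The main obstacle I anticipate is making the simultaneous-monitoring invariant fully rigorous across the entire propagation process rather than just at a single step, since propagation is a recursive closure and one must rule out any order of firing that could break the $u$/$v$ symmetry. The cleanest route is a careful induction: assume as inductive hypothesis that after $k$ propagation steps the monitored set $M_k$ contains $u$ if and only if it contains $v$, and verify that the $(k+1)$-st step preserves this, using that every neighbor of $u$ is a neighbor of $v$ and conversely. Once this invariant is established, it follows that $u$ and $v$ enter $M$ only simultaneously, which is impossible under the single-vertex propagation rule, completing the contradiction.
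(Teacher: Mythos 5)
Your proof is correct and takes essentially the same approach as the paper's: both argue by contradiction that if $S \cap N_1(T) = \emptyset$ and at least two twins $u,v$ lie outside $S$, then these vertices escape the domination step, and no vertex of $N_1(T)$ can ever have a unique unmonitored neighbor (being adjacent to both $u$ and $v$), so propagation into $T$ is permanently blocked. Your explicit induction on the propagation sequence tracking the pair $\{u,v\}$ is merely a more formal rendering of the paper's observation that every vertex of $N_1(T)$ has at least $|T \setminus S| \ge 2$ unmonitored neighbors.
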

\begin{proof}	
Let $G$ be a connected graph, and let $S$ be a \textbf{PDS} of $G$ and $T$ an open twin class of $G$. We note that $T$ is an independent set and $|T| \ge 2$. Moreover every $x \in N_1(T)$ is adjacent to every $y \in T$. Thus, every path connecting a vertex in $T$ and a vertex in $V(G) - T$ must contain a vertex in $N_1(T)$. We show that $S\cap N_1(T) \ne \emptyset$ or $|S\cap T|\ge |T|-1$. Suppose, to the contrary, that $S \cap N_1(T) = \emptyset$ and $|S \cap T| \le |T| - 2$. Let $R = T \setminus S$ (possibly, $R = T$ which occurs is $S \cap T = \emptyset$), and so $|R| = |T| - |S| \ge 2$. Since $T$ is an independent set and since $S \cap N_1(T) = \emptyset$, no vertex in $S$ is adjacent to a vertex in $R$. Thus, $R$ contains no vertex in $N_G[S]$, and so no vertex of $R$ is dominated in the initial domination step of \textbf{PDS}. As observed earlier, every path connecting a vertex in $T$ and a vertex in $V(G) - T$ must contain a vertex in $N_1(T)$. Hence in order to observe the vertices in $R$ we must first observe a vertex in $N_1(T)$ either by domination or by propagation from the vertices that do not belong to the set $R$. However, every vertex in $N_1(T)$ is adjacent to all vertices of $R$. Thus the vertices of $R$ cannot be propagated as every vertex of $N_1(T)$ has $|R| \ge 2$ vertices as neighbors. This contradicts the supposition that $S$ is a \textbf{PDS} of $G$.
\end{proof}

\begin{lem}
\label{pds:lbd}
If $G$ is a connected graph with $k$ vertex disjoint twin classes $T_1, \ldots, T_k$ such that $N_1(T_i) \cap N_1(T_j) = \emptyset$ for every $1 \le i < j \le k$, then $\gamma_P(G) \ge k$.
\end{lem}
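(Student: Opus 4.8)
The plan is to show that \emph{every} \textbf{PDS} $S$ of $G$ satisfies $|S|\ge k$; since $\gamma_P(G)$ is the minimum cardinality of a \textbf{PDS}, this will give $\gamma_P(G)\ge k$. The strategy is to charge to each twin class $T_i$ a vertex of $S$, and then to verify that these $k$ charged vertices are pairwise distinct.

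First I would fix a \textbf{PDS} $S$ and apply Lemma~\ref{pds:non-emptylemma} to each (open) twin class $T_i$, recalling that $|T_i|\ge 2$. For every index $i\in\mathbb{N}_k$ this yields the dichotomy $S\cap N_1(T_i)\ne\emptyset$, or $|S\cap T_i|\ge |T_i|-1\ge 1$. In the former case $S$ contains a vertex of $N_1(T_i)$; in the latter case $S$ contains a vertex of $T_i$. In both cases $S$ meets the set $A_i:=T_i\cup N_1(T_i)$, so I can select a representative $s_i\in S\cap A_i$ for each $i$.

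The decisive step is to prove that $s_1,\dots,s_k$ are pairwise distinct, which amounts to showing that the sets from which the representatives are drawn are pairwise disjoint. Splitting the indices according to which alternative applies, the representatives taken from neighborhoods lie in $\bigcup_i N_1(T_i)$, whose pieces are pairwise disjoint by the hypothesis $N_1(T_i)\cap N_1(T_j)=\emptyset$, while the representatives taken from the classes lie in $\bigcup_i T_i$, whose pieces are pairwise disjoint because the $T_i$ are vertex disjoint. Granting disjointness across the two groups as well, the $s_i$ are distinct and $|S|\ge k$, which completes the argument.

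I expect the main obstacle to be exactly this cross-disjointness: the hypothesis controls only the neighborhood--neighborhood overlaps $N_1(T_i)\cap N_1(T_j)$, and one must still rule out that a vertex charged to $T_i$ as an element of $N_1(T_i)$ is simultaneously a vertex of some other class $T_j$ (equivalently, that overlaps of the form $T_j\cap N_1(T_i)$ do not collapse two charges onto a single vertex). I would address this either by exploiting the additional separation that the twin classes enjoy in the fractal cubic network, so that the closed neighborhoods $N_1[T_i]$ are genuinely pairwise disjoint, or by refining the charging so that each $T_i$ is assigned a representative internal to $A_i$; pinning down this disjointness is the crux on which the bound $\gamma_P(G)\ge k$ rests.
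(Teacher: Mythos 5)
Your charging scheme is exactly the paper's proof: apply Lemma~\ref{pds:non-emptylemma} to each class, deduce $|S \cap (T_i \cup N_1(T_i))| \ge 1$ for every $i \in \mathbb{N}_k$, and sum over $i$ using pairwise disjointness of the sets $T_i \cup N_1(T_i)$. Where you differ is in your treatment of the last step: the paper flatly asserts that ``the sets $T_i \cup N_1(T_i)$ and $T_j \cup N_1(T_j)$ are vertex disjoint,'' whereas you correctly observe that the stated hypotheses (the $T_i$ pairwise disjoint, $N_1(T_i) \cap N_1(T_j) = \emptyset$) say nothing about overlaps of the form $T_j \cap N_1(T_i)$.

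The cross-disjointness you flag as the crux is not a loose end you failed to tie off; it is a genuine flaw in the lemma as stated, so no refinement of the charging can close it. Take $G = K_{2,2}$ with parts $T_1 = \{a_1, a_2\}$ and $T_2 = \{b_1, b_2\}$: both are open twin classes, they are vertex disjoint, and $N_1(T_1) = T_2$, $N_1(T_2) = T_1$, so $N_1(T_1) \cap N_1(T_2) = \emptyset$ and all hypotheses hold with $k = 2$. Yet $\{a_1\}$ is a \textbf{PDS} (domination monitors $a_1, b_1, b_2$, and then $b_1$ propagates to its unique unmonitored neighbor $a_2$), so $\gamma_P(K_{2,2}) = 1 < 2$. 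Here the two charges collapse onto the single vertex $a_1 \in T_1 \cap N_1(T_2)$, which is precisely the failure mode you predicted. The correct repair is your first suggestion: strengthen the hypothesis to require the sets $T_i \cup N_1(T_i)$ (equivalently the closed neighborhoods $N_1[T_i]$) to be pairwise disjoint, which is what the summation actually uses. This stronger hypothesis does hold in the intended application, since in $FCN(d)$ each twin class $\{u01, u10\}$ together with its neighborhood $\{u11, u00\}$ occupies exactly the four vertices with prefix $u$, and these quadruples are disjoint for distinct prefixes; hence Theorem~\ref{pdfcn} is unaffected. In short, your proposal reproduces the paper's argument and, unlike the paper, correctly identifies the unjustified step on which it rests.
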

\begin{proof}	
Let $S$ be a \textbf{PDS} of $G$. By Lemma~\ref{pds:non-emptylemma}, $S \cap N_1(T_i) \ne \emptyset$ or $|S \cap T_i| \ge |T_i|-1$ for all $i \in \mathbb{N}_k$. Thus since $|T_i| \ge 2$, we note that if $S \cap N_1(T_i) = \emptyset$, then $|S \cap T_i| \ge 1$. Hence at least one of $|S \cap N_1(T_i)| \ge 1$ or $|S \cap T_i| \ge 1$ holds for all $i \in \mathbb{N}_k$, implying that $|S \cap (T_i \cup N_1(T_i))| \ge 1$ for all $i \in \mathbb{N}_k$. By supposition, the twin classes $T_1, \ldots, T_k$ are vertex disjoint. Further, the sets $T_i \cup N_1(T_i)$ and $T_j \cup N_1(T_j)$ are vertex disjoint for every $1 \le i < j \le k$. From this we infer that
\[
|S| \ge \sum_{i = 1}^k |S \cap (T_i \cup N_1(T_i))| \ge k.
\]
Since $S$ in arbitrary \textbf{PDS} of $G$, we infer that $\gamma_P(G) \ge k$.
\end{proof}

We are now in a position to prove the following result.

\begin{thm}\label{pdfcn}
For $d>0$,  $\gamma_P(FCN(d))=4^{d}$.
\end{thm}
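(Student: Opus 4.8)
The plan is to prove the two matching bounds $\gamma_P(FCN(d)) \ge 4^d$ and $\gamma_P(FCN(d)) \le 4^d$ separately, obtaining the lower bound from Lemma~\ref{pds:lbd} and the upper bound from an explicit \textbf{PDS}. Throughout I will write a vertex of $FCN(d)$ as $p\,ab$, where $ab \in \{00,01,11,10\}$ are its last two coordinates and $p$ is the length-$2d$ prefix; the $4^d$ innermost copies of $FCN(0)$ are then exactly the $4$-cycles on $\{p\,00,\,p\,01,\,p\,11,\,p\,10\}$ with consecutive vertices $p\,00, p\,01, p\,11, p\,10$, one for each prefix $p$.

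The structural core of the argument is to control which vertices are incident with an edge that is \emph{not} an edge of an innermost $FCN(0)$-copy, i.e.\ with one of the edges added during the recursion. Unwinding the definition, at each level $\ell \in \mathbb{N}_d$ the four edges added form a $4$-cycle on the vertices obtained by prefixing $00,01,10,11$ to the common suffix $11\underbrace{0\cdots0}_{2\ell-2}$ of length $2\ell$, and inside $FCN(d)$ these vertices additionally carry an arbitrary prefix. I would verify, by induction on the construction, that every endpoint of an added edge has last two coordinates equal to $11$ (when $\ell = 1$) or $00$ (when $\ell \ge 2$, since then the suffix ends in a zero). Hence no vertex ending in $01$ or $10$ ever gains an extra edge, so each such vertex has degree exactly $2$ and keeps precisely the two neighbours it has inside its innermost copy.

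With this in hand the lower bound is straightforward. For each prefix $p$ the pair $T_p = \{p\,01,\, p\,10\}$ is an open twin class with $N_1(T_p) = \{p\,00,\, p\,11\}$, because both $p\,01$ and $p\,10$ are adjacent only to $p\,00$ and $p\,11$ in their $FCN(0)$-copy and receive no further edges. The $4^d$ classes $T_p$ are pairwise vertex disjoint, and the sets $T_p \cup N_1(T_p)$ are pairwise disjoint since distinct prefixes yield distinct vertices. Applying Lemma~\ref{pds:lbd} with these $4^d$ twin classes gives $\gamma_P(FCN(d)) \ge 4^d$.

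For the upper bound I would take $S$ to be the set of the $4^d$ vertices whose last two coordinates are $11$. In the domination step each $p\,11 \in S$ dominates its cycle-neighbours $p\,01$ and $p\,10$, so after this step $M$ contains every vertex except possibly some vertices ending in $00$. To complete the monitoring, for each still-unmonitored $p\,00$ I propagate along $p\,01$: since $N_1(p\,01) = \{p\,00,\, p\,11\}$ with $p\,11 \in M$, the vertex $p\,01$ has the single unmonitored neighbour $p\,00$, which is therefore added to $M$. These propagations are independent across the prefixes $p$, so they drive $M$ to all of $V$; thus $S$ is a \textbf{PDS} and $\gamma_P(FCN(d)) \le 4^d$. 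The main obstacle is the inductive structural claim of the second paragraph: because the recursion adds edges at every level and freely prepends prefixes, one must argue carefully that every added edge has both endpoints ending in $00$ or $11$. Once that is established, both the twin-class count for the lower bound and the propagation argument for the upper bound are routine.
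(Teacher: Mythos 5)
Your proposal is correct and follows essentially the same route as the paper: the lower bound is identical (the $4^d$ disjoint open twin classes $\{p\,01,\,p\,10\}$ fed into Lemma~\ref{pds:lbd}), and the upper bound differs only in the symmetric choice of the explicit \textbf{PDS} --- the $11$-suffix vertices rather than the paper's $01$-suffix set $A$ --- with the same domination-plus-one-step-propagation mechanism covering the remaining suffix class. If anything, your variant slightly streamlines the propagation check, since it runs through the degree-$2$ vertices $p\,01$ (where the unique unmonitored neighbour is immediate), whereas the paper propagates from possibly higher-degree vertices of $C$ and must argue that no two vertices of $B$ share a neighbour; your explicit inductive claim that every recursively added edge has both endpoints ending in $00$ or $11$ is exactly the structural fact the paper uses implicitly when asserting the $B$-vertices have degree~$2$.
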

\begin{proof}

Let $G = FCN(d)$. From the definition of $G$, we note that $N_{1}(u_{2d+2}u_{2d+1}\ldots u_301)=N_{1}(u_{2d+2}u_{2d+1}\ldots u_310)$, where $u_{2d+2}u_{2d+1}\ldots u_3\in \{0,1\}^{2d}$. That is,
$\{u_{2d+2}u_{2d+1}\ldots u_301,u_{2d+2}u_{2d+1}\ldots u_310\}$ is an open twin set in $G$. The $FCN(2)$ and its $2^4$ twin sets are marked in Figure~\ref{twinsinFCN}.
Hence, $G$ contains $2^{2d}$ twin sets, each containing exactly two vertices.  Therefore by Lemma~\ref{pds:lbd}, $\gamma_P(G)\ge 4^{d}$.

\begin{figure}[ht!]
\centering
\includegraphics[scale=0.8]{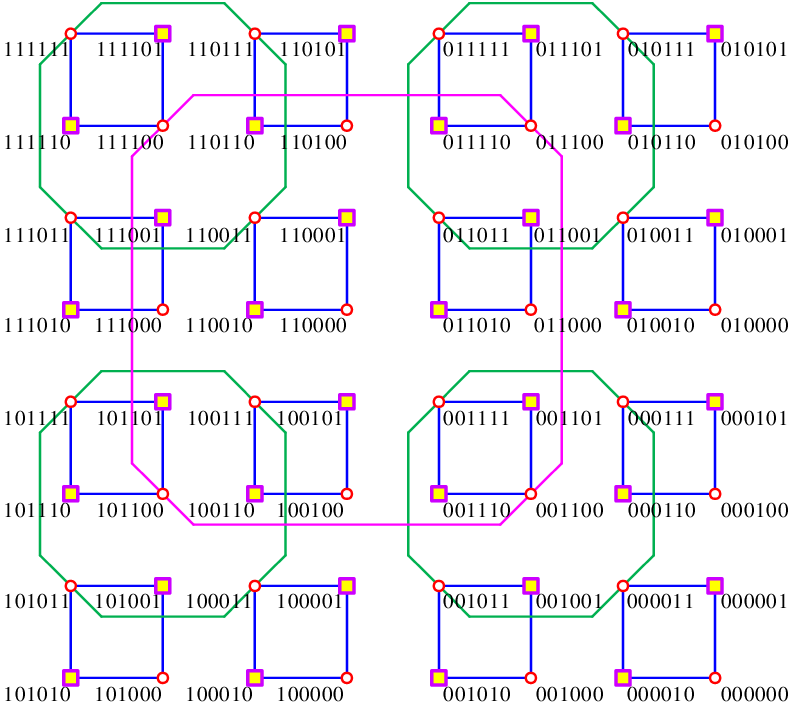} \quad \caption{The $2^4$ open twin sets in $FCN(2)$}\label{twinsinFCN}
\end{figure}
	
To show that $\gamma_P(G)\le 4^{d}$, let 
\[
A = \bigcup \{u_{2d+2}u_{2d+1}\ldots u_3 01 \} 
\hspace*{0.5cm} \mbox{and} \hspace*{0.5cm} 
B = \bigcup \{u_{2d+2}u_{2d+1}\ldots u_3 10 \} 
\]
where the union is taken over all $u_{2d+2}u_{2d+1}\ldots u_3\in \{0,1\}^{2d}$. We note that $|A| = |B| = 4^{d}$. Let $C = V(G) \setminus (A \cup B)$. We claim that the set $A$ is a power dominating set of $G$. For this, we have to prove that every vertex of $V$ is monitored by the vertices of $A$ either by domination or by propagation. We note that $N_1(u_{2d+2}u_{2d+1}\ldots u_301)=\{u_{2d+2}u_{2d+1}\ldots u_311, u_{2d+2}u_{2d+1}\ldots u_300\}$, where as before $u_{2d+2}u_{2d+1}\ldots u_3\in \{0,1\}^{2d}$. Hence, $N_G[A] = A \cup C$. Thus, all vertices in $A \cup C$ are monitored by the vertices of $A$ by domination. We note further that $A \cup B$ is an independent set and that $N_G(B) = C$. Also, each vertex $x \in \{u_{2d+2}u_{2d+1}\ldots u_3 10  \colon  u_i \in \{0,1\} \}$ is of degree~$2$ in $FCN(d)$ and is adjacent to vertex $y$, where $y\in N_1(u_{2d+2}u_{2d+1}\ldots u_301)$. Moreover, the vertices in $B$ are pairwise at distance at least~$3$ apart, and so no two vertices in $B$ have a common neighbor that belongs to $C$. Thus every dominated vertex in $C$ has all its neighbors dominated, except for exactly one vertex in $B$. Thus, each dominated vertex in $C$ propagates to its unique neighbor in $B$. Thus, each vertex $y$ is propagated by either  $u_{2d+2}u_{2d+1}\ldots u_311$ or $u_{2d+2}u_{2d+1}\ldots u_300$. Hence, $V = M(A)$, that is, $A$ is a power dominating set of $G$, implying that $\gamma_P(G) \le |A| = 4^{d}$. As observed earlier, $\gamma_P(G) \ge 4^{d}$. Consequently, $\gamma_P(G) = 4^{d}$.
\end{proof}

\section{Resolving Power Domination of FCN}

This section recalls the definition of a resolving-power dominating set and its minimum cardinality. A subset $S$ is both resolving and power-dominating. The subset $S$ of $V$ is called a resolving-power dominating set, and its minimum cardinality is the resolving power domination number and is notated by $\eta_ {P}(G)$. The first paper on this notion was introduced in \cite{StRaCy15}. 

\begin{thm}\label{rps} \rm\cite{StRaCy15}
For any simple connected graph $G$, $\max \{\dim(G), \gamma_P(G)\}\le \eta_{P}(G) \le \dim(G)+\gamma_P(G)$.
\end{thm}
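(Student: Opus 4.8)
The plan is to prove the two inequalities separately, and in both cases to exploit the fact that the defining property of an RPDS is simply the conjunction of two independent properties, each of which behaves well under set inclusion.

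For the lower bound $\max\{\dim(G),\gamma_P(G)\}\le \eta_P(G)$, I would argue directly from the definition. Let $S$ be any resolving-power dominating set of $G$. Since $S$ is in particular a resolving set, $|S|\ge \dim(G)$; since $S$ is in particular a power dominating set, $|S|\ge \gamma_P(G)$. Hence $|S|\ge \max\{\dim(G),\gamma_P(G)\}$, and taking $S$ to be a minimum RPDS gives $\eta_P(G)\ge \max\{\dim(G),\gamma_P(G)\}$. No further work is needed here, as this is immediate from the fact that every RPDS is simultaneously resolving and power dominating.

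For the upper bound $\eta_P(G)\le \dim(G)+\gamma_P(G)$, the idea is constructive. Let $R$ be a minimum resolving set, so $|R|=\dim(G)$, and let $D$ be a minimum power dominating set, so $|D|=\gamma_P(G)$. I would consider the set $S=R\cup D$ and show it is an RPDS, which immediately gives $\eta_P(G)\le |S|\le |R|+|D|=\dim(G)+\gamma_P(G)$. To see that $S$ is resolving, note that for any two distinct vertices $x,y$ there already exists $r\in R$ with $d_G(r,x)\ne d_G(r,y)$, and since $R\subseteq S$ this same $r$ lies in $S$; thus every pair is resolved by $S$. To see that $S$ is power dominating, observe that starting the observation process from $S\supseteq D$ the initial domination step monitors $N_G[S]\supseteq N_G[D]$, and every propagation step available from the seed $D$ remains available from the larger seed $S$, so the final monitored set from $S$ contains the final monitored set from $D$, which is all of $V(G)$.

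The only substantive point, and the step most worth stating explicitly, is the monotonicity of both properties under taking supersets: a superset of a resolving set is resolving, and a superset of a power dominating set is power dominating. The first is transparent. The second requires the observation that enlarging the seed set can only enlarge the monitored set at every stage of the domination-plus-propagation process, which I would justify by noting that each rule (\textbf{Domination} and \textbf{Propagation}) only ever adds vertices and its applicability is preserved when more vertices are already monitored. I do not expect any genuine obstacle here; the whole result is a structural consequence of the definition rather than a computation specific to any graph.
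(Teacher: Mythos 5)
Your proof is correct; since the paper states this theorem without proof (it is imported verbatim from \cite{StRaCy15}), there is no internal argument to compare against, but yours is exactly the standard one: the lower bound holds because an RPDS is simultaneously a resolving set and a power dominating set, and the upper bound because the union of a minimum resolving set with a minimum power dominating set inherits both properties, resolvability being trivially monotone under supersets and power domination being monotone in the seed set. The only wording worth tightening is ``every propagation step available from the seed $D$ remains available from the larger seed $S$'': a propagation step can become vacuous from $S$ because its target $y$ is already monitored, so the precise claim is that the \emph{final} monitored set is monotone in the seed --- formally, an induction on the order in which vertices are observed in the process started from $D$, showing each such vertex ends up observed in the process started from $S \supseteq D$ --- which is exactly what your remark that both rules only ever add vertices delivers, so there is no genuine gap.
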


Since then, we are not aware of further research on this problem. Prabhu et al. recently investigated this parameter for probabilistic neural networks (PNN) in \cite{PrDeAr22}. As a consequence of our main results in this paper, we determine the resolving power domination number of a fractal cubic network.

\begin{thm}
For $d \ge 1$, $\eta_{P}(FCN(d))=4^d$.	
\end{thm}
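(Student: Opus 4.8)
The plan is to establish the final claim $\eta_P(FCN(d)) = 4^d$ by sandwiching the resolving power domination number between matching lower and upper bounds, both equal to $4^d$, using Theorem~\ref{rps} together with the two results already proven in the excerpt. From Theorem~\ref{mmdfcn} we have $\dim(FCN(d)) = 4^d$, and from Theorem~\ref{pdfcn} we have $\gamma_P(FCN(d)) = 4^d$. Plugging these into the general inequality $\max\{\dim(G), \gamma_P(G)\} \le \eta_P(G)$ immediately yields the lower bound $\eta_P(FCN(d)) \ge \max\{4^d, 4^d\} = 4^d$. This handles the lower bound with essentially no additional work, since it is a direct consequence of the cited Theorem~\ref{rps}.

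The main work, and the place where I expect the real obstacle to lie, is the matching upper bound $\eta_P(FCN(d)) \le 4^d$. The naive upper bound from Theorem~\ref{rps} gives only $\eta_P(G) \le \dim(G) + \gamma_P(G) = 2 \cdot 4^d$, which is off by a factor of two, so I cannot simply invoke the theorem. Instead, I would revisit the explicit power dominating set $A = \bigcup\{u_{2d+2}u_{2d+1}\ldots u_3 01\}$ of cardinality $4^d$ constructed in the proof of Theorem~\ref{pdfcn}, and argue that this same set $A$ is already a resolving set for $FCN(d)$. If I can show that $A$ both resolves $FCN(d)$ and power-dominates it, then $A$ is a resolving power dominating set of size $4^d$, giving $\eta_P(FCN(d)) \le |A| = 4^d$ directly.

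The crux is therefore to verify that $A$ is a resolving set. The natural strategy is to exploit the recursive four-copy structure of $FCN(d)$ and the fact that $|A| = 4^d = \dim(FCN(d))$, so $A$ has exactly the minimum possible size for a resolving set and has one representative landmark (namely $u_{2d+2}\ldots u_3 01$) inside each of the $4^d$ base-level gadgets. I would try to show that for any two distinct vertices $x, y \in V(FCN(d))$, some landmark in $A$ has distinct distances to $x$ and $y$; the most efficient route is likely to relate $A$ to the resolving set used in the proof of Theorem~\ref{mmdfcn} in \cite{ArKlPr23}, or to recompute distances along the recursive decomposition, checking that the ``$01$'' representative in each copy distinguishes the vertices lying in that copy from one another and that the inter-copy connecting edges preserve distinctness of codes. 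The delicate part will be confirming that the $01$-vertices, which were selected primarily for their power-domination propagation properties, happen to also break all distance ties — in particular handling the open-twin pairs $\{u\ldots01, u\ldots10\}$ and the four bridging edges, where vertices in different copies could a priori receive identical distance vectors.

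If a single set $A$ of size $4^d$ cannot simultaneously resolve and power-dominate, the fallback is to reuse the established lower bound $\eta_P \ge 4^d$ and to exhibit \emph{some} resolving power dominating set of size exactly $4^d$ by possibly modifying $A$ slightly (for instance swapping a few $01$-vertices for their $10$-twins, or adjusting the landmark positions within each copy) while preserving both the propagation argument from Theorem~\ref{pdfcn} and the resolving property from Theorem~\ref{mmdfcn}; since both optimal parameters coincide at $4^d$, I expect such a set to exist, and the remaining task is purely the verification that the two properties can be met together without inflating the cardinality beyond $4^d$.
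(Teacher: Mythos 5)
Your diagnosis is actually sharper than the paper's own argument. The paper's proof of this theorem is a single line asserting that the result ``follows immediately'' from Theorems~\ref{mmdfcn},~\ref{pdfcn} and~\ref{rps}; but as you correctly observe, those three results alone yield only $4^d \le \eta_P(FCN(d)) \le 2\cdot 4^d$, and the tight upper bound genuinely requires exhibiting a single set of cardinality $4^d$ that is simultaneously a resolving set and a \textbf{PDS}. Your lower-bound derivation matches the paper exactly, and your insistence on supplying the missing upper-bound witness is the right repair of the printed proof, not a detour from it.

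That said, your proposal stops short at precisely the crux: you never verify that $A$ resolves $FCN(d)$. Phrases such as ``I would try to show'' and the fallback of swapping twins are plans, not proofs, so as written there is a gap. Here is the clean way to close it without recomputing a single distance. The $4^d$ open twin pairs $\{u_{2d+2}\ldots u_3 01,\, u_{2d+2}\ldots u_3 10\}$ are pairwise disjoint, and since open twins are at equal distance from every third vertex, any resolving set must contain at least one vertex of each pair; by Theorem~\ref{mmdfcn} a metric basis has size exactly $4^d$, so it consists of \emph{exactly one vertex from each pair and nothing else}, i.e., it is a transversal of the twin classes. Now observe that the domination-and-propagation argument in the proof of Theorem~\ref{pdfcn} uses only this transversal property of $A$: open twins share the same open neighborhood, so for any transversal $A'$ one has $N_G[A'] \supseteq C$, and each vertex of $C$ then has at most one unmonitored neighbor (the unchosen twin of its unique pair, by the distance-at-least-$3$ separation of the pairs), so propagation completes. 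Hence \emph{every} metric basis of $FCN(d)$, in particular the one guaranteed by \cite{ArKlPr23}, is automatically a \textbf{PDS}, giving $\eta_P(FCN(d)) \le 4^d$ with no need to check that your specific set $A$ breaks all distance ties. With that substitution your outline becomes a complete and fully rigorous proof --- indeed one that is more detailed than the paper's.
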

\begin{proof}
The proof follows immediately from Theorems~\ref{mmdfcn},~\ref{pdfcn} and~\ref{rps}.
\end{proof}

\section{Conclusion}

\par Multistage interconnection networks are essential to parallel computing because their performance on a large scale is determined by their connectivity. Communication efficacy is a key prosecution indicator in parallel computing. The diameter of a network's interconnections is an essential metric of transmission efficiency. Hypercube is prevalent in all architecture owing to its advantageous properties. There are numerous prospective variants of this network that can be created by modifying certain links. This variant of hypercube is one such variant. In the present investigation, we concentrate on power domination and resolving-power domination for this new invariant in the most optimal manner. In this paper, we completely determine these parameters for fractal cubic networks. 

\textbf{Compliance with ethical standards} \\
\textbf{Conflict of interest:} The authors declare that there is no conflict of interest regarding the publication of this paper.\\
\textbf{Data Availability Statement:} No Data associated with the manuscript.

\end{document}